\newtheorem{thm}{Theorem}[section]
\newtheorem{lem}[thm]{Lemma}
\newtheorem{remark}[thm]{Remark}
\newcommand{\R}{\mathbb{R}}
\newcommand\na{\nabla}
\newcommand\p{\partial}
\renewcommand{\S}{\mathcal S}
\numberwithin{equation}{section} % numbering with sections
\tikzstyle arrowstyle=[scale=1]
\tikzstyle directed=[postaction={decorate,decoration={markings,
    mark=at position .65 with {\arrow[arrowstyle]{stealth}}}}]
\tikzstyle reverse directed=[postaction={decorate,decoration={markings,
    mark=at position .65 with {\arrowreversed[arrowstyle]{stealth};}}}]
\title[ ]
{A remark on an obstacle problem with lower regularity}
\author{Aram  Karakhanyan}
\address{School of Mathematics, University of Edinburgh, Peter Guthrie Tait Road
Edinburgh, 
EH9 3FD}
\email{aram6k@gmail.com}
\urladdr{www.maths.ed.ac.uk/$\sim$aram}
\date{\today \,(Last Typeset)}
\subjclass[2010]{35R35.}
\keywords{Free boundary regularity, obstacle problem}
\begin{document}

\begin{abstract}
We construct a monotone quantity for the classical obstacle problem 
with non-smooth obstacle,  and show that the blow-ups are homogeneous functions of degree $\alpha<2$.  
\end{abstract}

\maketitle

%----------------------
%     Section
%----------------------
\section{Introduction}\label{sec1}

Let $\Omega\subset \R^n$ be a bounded smooth domain and let 
\begin{equation}
K=\{ w : w-g\in W^{1, 2}_0(\Omega), w\ge \phi\}
\end{equation}
where $g\in W^{1, 2}(\Omega)$ is given Dirichlet data and $\phi\in W^{2, p}(\Omega)\cap C^{1, \beta}(\Omega)$ is the obstacle. We assume that $g\ge \phi$ on 
$\p \Omega$. 
The obstacle problem then can be formulated as follows: find a $u\in K$ such that 
\begin{eqnarray*}
\int _{\Omega }\left| \nabla u\right| ^{2}\leq \int _{\Omega }\left| \nabla w\right| ^{2}, \quad \forall w\in K.
\end{eqnarray*}
This problem has been extensively studied for smooth $\phi$, see \cite {MR454350}, \cite{Friedman}, \cite{MR567780},\cite{MR3648978}, \cite{MR3904453}, \cite{MR3855748}, \cite{MR1714335} and references therein.
Our aim is to study this problem for  $C^{1, \beta}$ regular $\phi$. 

\begin{thm}\label{thm-1}
Suppose that  $\phi\in C^{1, \beta}, \phi(0)=|\na \phi(0)|=0$ , $0\in \Omega$  and the following inequality holds
\begin{equation}
\left( \dfrac {|x|}{R}\right) ^{\alpha }\phi \left( R\dfrac {x}{|x|}\right) \geq \phi \left( x\right). 
\end{equation}
Then for every sequence $R_i\downarrow 0$ there is a subsequence $R_{i_m}$
such that $u_{m}=\frac{u(R_{i_m})}{R^{\alpha}_{i_m}}$ converges to some 
$u_0$ and $u_0$ is either homogeneous function of degree $\alpha:=1+\beta$ or
$u_0\equiv 0$. 
\end{thm}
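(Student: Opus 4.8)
The plan is to construct a Weiss-type monotonicity formula adapted to the homogeneity exponent $\alpha=1+\beta$, in which the contribution of the obstacle is controlled using exactly the scaling hypothesis on $\phi$, and then to run the standard blow-up compactness argument. First I would record the pointwise structure: setting $w:=u-\phi\ge0$, the variational inequality gives that $u$ is superharmonic and that $w\in W^{2,p}_{\rm loc}$ with $\Delta w=-\Delta\phi\,\I{w}$; hence $\Delta u=\Delta\phi\,\chi_{\{u=\phi\}}$ a.e., and since $-\Delta u\ge0$ one also gets $\Delta\phi\le0$ a.e.\ on the coincidence set $\{u=\phi\}$. The hypothesis $(|x|/R)^{\alpha}\phi(R\,x/|x|)\ge\phi(x)$ says precisely that $r\mapsto r^{-\alpha}\phi(r\theta)$ is non-decreasing along every ray, equivalently $x\cdot\na\phi(x)\ge\alpha\,\phi(x)$ a.e. Finally, $\phi\in C^{1,\beta}$ together with elliptic regularity for $w$ forces $u\in C^{1,\beta}_{\rm loc}$ near $0$, and at the free boundary point $0\in\fb{u}$ one has $u(0)=|\na u(0)|=0$ (as $u-\phi\ge0$ attains its minimum there), yielding the growth bound $\sup_{B_r}|u|\le C\,r^{\alpha}$.

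Next, with $u_r(x):=r^{-\alpha}u(rx)$ and $\phi_r(x):=r^{-\alpha}\phi(rx)$, I would introduce
\[
W(r):=\int_{B_1}|\na u_r|^2\,dx-\alpha\int_{\p B_1}u_r^2\,d\mathcal H^{n-1}
=\frac{1}{r^{\,n+2\alpha-2}}\int_{B_r}|\na u|^2-\frac{\alpha}{r^{\,n+2\alpha-1}}\int_{\p B_r}u^2 .
\]
Differentiating in $r$ and integrating by parts, using $\partial_r u_r=\tfrac1r(x\cdot\na u_r-\alpha u_r)$, $\Delta u_r=\Delta\phi_r\,\chi_{\{u_r=\phi_r\}}$ and $\na u_r=\na\phi_r$ a.e.\ on $\{u_r=\phi_r\}$, gives
\[
W'(r)=\frac{2}{r}\int_{\p B_1}\bigl(x\cdot\na u_r-\alpha u_r\bigr)^2\,d\mathcal H^{n-1}
-\frac{2}{r}\int_{B_1\cap\{u_r=\phi_r\}}\bigl(x\cdot\na\phi_r-\alpha\phi_r\bigr)\,\Delta\phi_r\,dx .
\]
The first term is a square; in the second, the rescaled sub-homogeneity gives $x\cdot\na\phi_r-\alpha\phi_r\ge0$ while superharmonicity of $u$ on the contact set gives $\Delta\phi_r\le0$ there, so the product is $\le0$ and the whole second term is $\ge0$. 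Thus $W$ is non-decreasing; combined with the growth bound (which makes $W$ bounded below) the limit $W(0^+)=\lim_{r\downarrow0}W(r)$ exists and is finite.

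For the blow-up, the growth bound and $W(r)\le W(r_0)$ give $\|u_r\|_{W^{1,2}(B_1)}\le C$; writing $u_r=w_r+\phi_r$, the identity $\|\Delta\phi_r\|_{L^p(B_1)}^p=r^{(2-\alpha)p-n}\|\Delta\phi\|_{L^p(B_r)}^p$ gives, under the standing integrability of $\Delta\phi$, uniform $W^{2,p}_{\rm loc}$ bounds on $w_r$ hence uniform $C^{1,\gamma}_{\rm loc}$ bounds, while $\|\phi_r\|_{C^{1,\beta}}$ is bounded because $\alpha=1+\beta$ makes that norm scale-invariant. A diagonal Arzel\`a--Ascoli argument then produces, along a subsequence $R_{i_m}\downarrow0$, a limit $u_m\to u_0$ in $C^1_{\rm loc}(\R^n)$; moreover, monotonicity of $r\mapsto r^{-\alpha}\phi(r\theta)$ together with its uniform Lipschitz dependence on $\theta$ (again from $\na\phi(0)=0$, $\na\phi\in C^{0,\beta}$) upgrades the ray-wise limit to $\phi_m\to\Phi_0$ with $\Phi_0(x)=|x|^{\alpha}\psi(x/|x|)$ positively $\alpha$-homogeneous. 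Since $W$ is scale-invariant, $W(\rho,u_m)=W(\rho R_{i_m},u)\to W(0^+)$ while $W(\rho,u_m)\to W(\rho,u_0)$, so $\rho\mapsto W(\rho,u_0)$ is constant. As $u_0$ inherits the structure of a solution of the limiting obstacle problem with the $\alpha$-homogeneous obstacle $\Phi_0$ (for which $x\cdot\na\Phi_0=\alpha\Phi_0$), the same differentiation identity applies to $u_0$; its derivative vanishes, so the square term in it must vanish, i.e.\ $x\cdot\na u_0=\alpha u_0$, and by continuity $u_0$ is positively homogeneous of degree $\alpha$ — the remaining alternative being the trivial profile $u_0\equiv0$.

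The step I expect to be the main obstacle is getting the sign in $W'$ to come out right: recognizing that superharmonicity of $u$ pins down $\Delta\phi\le0$ on the contact set, which is exactly what pairs with the sub-homogeneity $x\cdot\na\phi\ge\alpha\phi$ to make $W$ monotone without ad hoc correction terms. Alongside it, making the blow-up machinery robust to the low regularity of $\phi$ — proving $\phi_m\to\Phi_0$ to a genuinely $\alpha$-homogeneous profile and controlling $\|\Delta\phi_r\|_{L^p}$ so that the monotonicity formula passes to the limit — is where the assumption $\phi\in C^{1,\beta}$ rather than $C^{1,1}$ must be handled carefully.
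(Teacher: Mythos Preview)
Your proposal is correct and arrives at the same Weiss-type monotone quantity as the paper, but the \emph{derivation of its monotonicity} is genuinely different. The paper never invokes the Euler--Lagrange equation: it constructs the competitor $w(x)=(|x|/R)^{\alpha}u(Rx/|x|)$, observes that the scaling hypothesis on $\phi$ is \emph{exactly} the statement $w\ge\phi$ (so $w$ is admissible in $K$), and then the bare minimality inequality $\int_{B_R}|\nabla u|^2\le\int_{B_R}|\nabla w|^2$, after an explicit polar computation of $|\nabla w|^2$, rearranges directly to $A'(R)\ge\int_{\mathcal S}R^{1-2\alpha}(\partial_\nu u-\alpha u/R)^2$. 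Your route instead passes through $\Delta u=\Delta\phi\,\chi_{\{u=\phi\}}$ and must control a bulk term by pairing $x\cdot\nabla\phi\ge\alpha\phi$ with $\Delta\phi\le 0$ on the contact set; this works, but it spends the $W^{2,p}$ structure and the superharmonicity of $u$, whereas the paper's variational argument uses only minimality and the scaling hypothesis on $\phi$. At the blow-up stage the paper is again more direct: it passes to the limit in the integrated inequality $A(s_2,u_m)-A(s_1,u_m)\ge\int_{s_1}^{s_2}\int_{\mathcal S}R^{1-2\alpha}(\partial_\nu u_m-\alpha u_m/R)^2\,dR$, which under $C^1$ convergence immediately forces $x\cdot\nabla u_0=\alpha u_0$, so there is no need to identify a limiting obstacle $\Phi_0$ or to argue that $u_0$ itself solves an obstacle problem. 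Your approach, on the other hand, produces an explicit nonnegative obstacle-defect term in $W'$ and identifies $\Phi_0$ as the $\alpha$-homogeneous cone of $\phi$, which is additional structural information the paper does not extract.
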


\begin{remark}\label{rem-1}
Recall that $\na u$ and $\na \phi$ have  comparable moduli  of continuity, see page 46 \cite{Friedman}.  
\end{remark}

\begin{lem}\label{lem-1}
Let $u$ be as in Theorem \ref{thm-1}. Let 
\begin{eqnarray*}
A(R, u)=\frac1{R^{n+2(\alpha-1)}}\int_{B_R}|\na u|^2-\alpha \int_{\S}\left(\frac{u}{R^\alpha}\right)^2
\end{eqnarray*}
Then $A(R, u)$ is nondecreasing function and $A"(R)=0$ if and only if $u$ is a homogenous function of degree $\alpha$.
\end{lem}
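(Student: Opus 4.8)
The plan is to follow the Weiss-type monotonicity strategy: express $A(R,u)$ in a scale-invariant form, differentiate in $R$, and show the derivative is nonnegative by an integration-by-parts identity combined with the differential inequality satisfied by $u$ and the hypothesis on $\phi$. First I would normalize. Writing $u_R(x) = u(Rx)/R^\alpha$, one checks that $A(R,u) = \int_{B_1}|\nabla u_R|^2 - \alpha\int_{\partial B_1} u_R^2$, so that $A$ is exactly the Weiss energy at unit scale of the rescaled solution, with homogeneity exponent $\alpha$. Differentiating this in $R$ reduces everything to computing $\frac{d}{dR}u_R$, which equals $R^{-1}(x\cdot\nabla u_R - \alpha u_R)$; denote this radial-defect quantity by $z_R$. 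The point of the normalization is that $A'(R)$ will turn out to be a nonnegative multiple of $\int_{\partial B_1} z_R^2$, vanishing precisely when $z_R\equiv 0$ on a set of radii, i.e. when $u$ is homogeneous of degree $\alpha$.

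Next I would carry out the differentiation. A standard computation (as in Weiss's almost-monotonicity formula for the obstacle problem) gives
\begin{equation*}
A'(R) \;=\; \frac{2}{R}\int_{\partial B_1}\Bigl(\partial_\nu u_R - \alpha u_R\Bigr)^2 \;+\; (\text{error terms from }\Delta u_R).
\end{equation*}
Here one uses that $u$ solves $\Delta u = \chi_{\{u>0\}}$ in the domain, so $u_R$ solves $\Delta u_R = R^{2-\alpha}\chi_{\{u_R>0\}}$; the factor $R^{2-\alpha}$ is a positive power of $R$ since $\alpha = 1+\beta < 2$, which is exactly where the restriction $\alpha<2$ enters. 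The terms coming from the Laplacian, after integrating by parts, produce contributions of the form $\int_{B_1} z_R\,\Delta u_R$ and a boundary term; the former is controlled because on $\{u_R>0\}$ one has $z_R\,\Delta u_R = R^{2-\alpha} z_R$, and the interplay with the obstacle—through $u_R\ge \phi_R$ where $\phi_R(x)=\phi(Rx)/R^\alpha$ and the scaling hypothesis $(|x|/R)^\alpha\phi(Rx/|x|)\ge\phi(x)$ forces $\phi_R$ to be (sub)homogeneous of degree $\alpha$—guarantees the remaining term has a sign. Concretely, the hypothesis says $\phi_R$ is pointwise below its own $\alpha$-homogeneous extension, which makes $x\cdot\nabla\phi_R - \alpha\phi_R\le 0$ in the right places; combined with the fact that $u_R = \phi_R$ on the contact set and $u_R > \phi_R\ge 0$ off it, the cross term is nonnegative.

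The main obstacle will be the treatment of the contact set and the region $\{u_R=0\}$ in the integration by parts: $\Delta u_R$ is only a measure (it has a jump across the free boundary), and on the contact set $\{u_R=\phi_R\}$ one must replace $u_R$-derivatives by $\phi_R$-derivatives and use the scaling inequality for $\phi$ with care, since $\phi$ is only $C^{1,\beta}$ and $W^{2,p}$. The cleanest route is to work with the difference $v_R = u_R - \phi_R\ge 0$, which satisfies a nicer equation, prove monotonicity of the corresponding Weiss functional for $v_R$ using that $\Delta\phi_R\to 0$ fast enough (again using $\alpha<2$ and Remark 1, so that $\nabla\phi$ and hence $\nabla u$ have modulus of continuity $O(r^{\beta})$), and then transfer back to $A(R,u)$ absorbing lower-order terms. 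Finally, the rigidity statement—$A''(R)=0$ iff $u$ is $\alpha$-homogeneous—follows because $A'(R)=0$ on an interval forces $z_R\equiv 0$ there, i.e. $x\cdot\nabla u - \alpha u = 0$, which is precisely Euler's identity for homogeneity of degree $\alpha$; I would note that "$A''(R)=0$'' in the statement should be read together with $A'$ vanishing, or equivalently one shows $A'\equiv\text{const}$ forces that constant to be $0$ by the $A'\ge 0$ bound and boundedness of $A$ near $R=0$.
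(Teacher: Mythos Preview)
Your proposal contains a genuine error: you assume $u$ satisfies $\Delta u=\chi_{\{u>0\}}$, but that is the equation for the \emph{zero-obstacle} problem with constant right-hand side, not for the problem stated here. In the setting of Theorem~\ref{thm-1} the minimizer of the Dirichlet energy over $\{w\ge\phi\}$ is harmonic on the non-coincidence set $\{u>\phi\}$ and equals $\phi$ on the coincidence set; globally $\Delta u$ is a nonpositive measure supported on $\{u=\phi\}$, with no explicit density unless one knows $\Delta\phi$. So the term you call ``error terms from $\Delta u_R$'' is neither $R^{2-\alpha}\chi_{\{u_R>0\}}$ nor anything with an a priori sign, and your subsequent argument about controlling $\int_{B_1} z_R\,\Delta u_R$ does not go through as written. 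The fallback you sketch (pass to $v_R=u_R-\phi_R$ and use $\Delta\phi\in L^p$) would require a genuinely different computation and additional estimates; it is not a minor patch.

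The paper takes a cleaner route that sidesteps the PDE entirely. It uses only the \emph{variational} characterization of $u$: for each $R$ one builds the $\alpha$-homogeneous competitor
\[
w(x)=\Bigl(\tfrac{|x|}{R}\Bigr)^{\alpha}u\Bigl(R\tfrac{x}{|x|}\Bigr),
\]
which agrees with $u$ on $\partial B_R$. The scaling hypothesis on $\phi$ is used precisely here --- not in differential form as you suggest, but pointwise --- to verify $w\ge\phi$ in $B_R$, so that $w$ is admissible. Minimality then gives $\int_{B_R}|\nabla u|^2\le\int_{B_R}|\nabla w|^2$, and since $|\nabla w|^2$ separates in polar coordinates one computes $\int_{B_R}|\nabla w|^2$ explicitly as a boundary integral on $\partial B_R$. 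Rearranging this single inequality yields directly
\[
A'(R)\ \ge\ \frac{1}{R^{2\alpha-1}}\int_{\mathcal S}\Bigl(\partial_\nu u-\alpha\,\tfrac{u}{R}\Bigr)^2\ \ge\ 0,
\]
with equality forcing $x\cdot\nabla u=\alpha u$, i.e.\ $\alpha$-homogeneity. No integration by parts against $\Delta u$, no contact-set analysis, and no use of $\phi\in W^{2,p}$ are needed for the monotonicity itself.
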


\begin{proof}
Define 
\begin{eqnarray*}
w\left( x\right) :=\left( \dfrac {|x|}{R}\right) ^{\alpha }u\left( R\dfrac {x}{|x|}\right) \geq \left( \dfrac {|x|}{R}\right) ^{\alpha }\phi \left( R\dfrac {x}{|x|}\right) \geq \phi \left( x\right),  
\end{eqnarray*}
and
denote $y=R\dfrac {x}{|x|}$.
Then we have 
\begin{eqnarray*}
\nabla _{i}w\left( x\right) 
=
\alpha \left( \dfrac {|x|}{R}\right) ^{\alpha -1}\dfrac {x_{i}}{|x|R}u\left( R\dfrac {x}{|x|}\right) 
+
\left( \dfrac {|x|}{R}\right) ^{\alpha }u_{y_{m}}R\left( \dfrac {\delta _{ij}}{|x|}-\dfrac {x_{i}x_{m}}{|x|^{3}}\right). 
\end{eqnarray*}
Observe that 
\begin{eqnarray*}
\sum ^{n}_{i=1}x_{i}u_{y_i}-\sum ^{n}_{m=1}u_{y_m}x_{m}\sum ^{n}_{i=1}\dfrac {x^{2}_{i}}{|x|^{2}}=0.
\end{eqnarray*}
Consequently, 
\begin{eqnarray*}
\left| \nabla w\right| ^{2}
&=&
\alpha ^{2}\left| \dfrac {x}{R}\right| ^{2\left( \alpha -1\right) }\dfrac {1}{R^{2}}u^{2}\left( R\dfrac {x}{x}\right)
+
\left( \dfrac {|x|}{R}\right) ^{2\alpha }\dfrac {R^{2}}{|x|^{2}}\left[ \nabla u-\dfrac {x}{|x|}\left( \nabla u\dfrac {x}{|x|}\right) \right] ^{2}\\
&=&
\left( \dfrac {|x|}{R}\right) ^{2\left( \alpha -1\right) }
\left[ \alpha^2 \dfrac {u^{2}}{R^{2}}+\left| \na u\right| ^{2}-\left( \na u\dfrac {x}{|x|}\right) ^{2}\right]. 
\end{eqnarray*}

Next we compute the Dirichlet energy 
\begin{eqnarray*}
\int_{B_R} \left( \dfrac {|x|}{R}\right) ^{2\left( \alpha -1\right) }
\left[ \alpha^2 \dfrac {u^{2}}{R^{2}}+\left| \na u\right| ^{2}-\left( \na u\dfrac {x}{|x|}\right) ^{2}\right] \\
=
\frac1{R^{2(\alpha-1)}}\int_{\S} 
\left[ \alpha^2 \dfrac {u^{2}(R\theta)}{R^{2}}+\left| \na u(R\theta)\right| ^{2}-\left( \na u(R\theta)\theta\right) ^{2}\right] \int_0^R\rho^{2(\alpha-1)+n-1}d\rho\\
=
\frac{R^{n+2(\alpha-1)}}{n+2(\alpha-1)}\frac1{R^{2(\alpha-1)}}\int_{\S} 
\left[ \alpha^2 \dfrac {u^{2}(R\theta)}{R^{2}}+\left| \na u(R\theta)\right| ^{2}-\left( \na u(R\theta)\theta\right) ^{2}\right]\\
=
\frac{R^{n}}{n+2(\alpha-1)}\int_{\S} 
\left[ \alpha^2 \dfrac {u^{2}(R\theta)}{R^{2}}+\left| \na u(R\theta)\right| ^{2}-\left( \na u(R\theta)\theta\right) ^{2}\right].
\end{eqnarray*}

From $\int_{B_R} |\na u|^2\le \int_{B_R}|\na w|^2$ we infer 
\begin{eqnarray*}
\int_{B_R}|\na u|^2\le \frac{R}{n+2(\alpha-1)} \int_{\p B_R} 
\left[ \alpha^2 \dfrac {u^{2}}{R^{2}}+\left| \na u \right| ^{2}-\left( \na u\cdot \nu \right) ^{2}\right], 
\end{eqnarray*}
or equivalently 
\begin{eqnarray*}
 \int_{\p B_R} 
\left[ -\alpha^2 \dfrac {u^{2}}{R^{2}}+\left( \na u\cdot \nu \right) ^{2}\right]
\le
 \int_{\p B_R} 
\left| \na u \right| ^{2}-
\frac{n+2(\alpha-1)}R\int_{B_R}|\na u|^2\\
=
R^{n+2(\alpha-1)}\frac d{dR} \left(\frac1{R^{n+2(\alpha-1)}}\int_{B_R}|\na u|^2\right).
\end{eqnarray*}

Consequently 
\begin{eqnarray*}
\frac d{dR} \left(\frac1{R^{n+2(\alpha-1)}}\int_{B_R}|\na u|^2\right)
\ge 
\int_{\S} \frac1{R^{1+2(\alpha-1)}}(\p_\nu u)^2-\alpha^2\frac {u^2}{R^{3+2(\alpha-1)}}\\
=
\int_{\S}\frac1{R^{1+2(\alpha-1)}}\left(\p_\nu-\alpha \frac u R\right)^2-
2\alpha^2\frac{u^2}{R^{3+2(\alpha-1)}}+
2\alpha\frac u R \p_\nu u\frac1{R^{1+2(\alpha-1)}}\\
=
\int_{\S}\frac1{R^{1+2(\alpha-1)}}\left(\p_\nu u-\alpha \frac u R\right)^2
+
\alpha\frac d{dR}\int_{\S}\left(\frac{u}{R^\alpha}\right)^2.
\end{eqnarray*}

Thus if we denote %Denoting 
\begin{eqnarray*}
A(R)=\frac1{R^{n+2(\alpha-1)}}\int_{B_R}|\na u|^2-\alpha \int_{\S}\left(\frac{u}{R^\alpha}\right)^2
\end{eqnarray*}
we obtain 
\begin{eqnarray*}
A'(R, u)\ge \int_{\S}\frac1{R^{1+2(\alpha-1)}}\left(\p_\nu u-\alpha \frac u R\right)^2\ge 0.
\end{eqnarray*}

\end{proof}

Now we can finish the proof of Theorem \ref{thm-1}.
\begin{proof}
By Remark \ref{rem-1} we have that 
$\{u_m\}_{m=1}^\infty$ is bounded in $C^{1, \beta}$ for a subsequence.
Using the scaling properties of $A(R, u)$ we see that for $s>0$ we have 
\[
A(s R_{i_m}, u)=A(s, u_m).
\]
Consequently, if $0<s_1<s_2$ then by Lemma \ref{lem-1} we have 
\[
A(s_2R_{i_m}, u)-A(s_1R_{i_m}, u)
=
A(s_2, u_m)-A(s_1, u_m)\ge \int_{s_1}^{s_2}\left[
 \int_{\S}\frac1{R^{1+2(\alpha-1)}}\left(\p_\nu u_m-\alpha \frac {u_m} R\right)^2\right]dR\ge 0.
\] 
Using a customary compactness argument we see that there is a function $u_0$ such that 
$u_m\to u_0$ in $C^{1, \beta}_{loc}(\R^n)$.
One the other hand $\lim_{m\to \infty}(A(s_2R_{i_m}, u)-A(s_1R_{i_m}, u))=0.$
\end{proof}

%\bibliography{references}
\begin{bibdiv}
\begin{biblist}

\bib{MR454350}{article}{
   author={Caffarelli, Luis A.},
   title={The regularity of free boundaries in higher dimensions},
   journal={Acta Math.},
   volume={139},
   date={1977},
   number={3-4},
   pages={155--184},
   issn={0001-5962},
   review={\MR{454350}},
   doi={10.1007/BF02392236},
}

\bib{MR567780}{article}{
   author={Caffarelli, Luis A.},
   title={Compactness methods in free boundary problems},
   journal={Comm. Partial Differential Equations},
   volume={5},
   date={1980},
   number={4},
   pages={427--448},
   issn={0360-5302},
   review={\MR{567780}},
   doi={10.1080/0360530800882144},
}

\bib{MR3648978}{article}{
   author={Caffarelli, Luis},
   author={Ros-Oton, Xavier},
   author={Serra, Joaquim},
   title={Obstacle problems for integro-differential operators: regularity
   of solutions and free boundaries},
   journal={Invent. Math.},
   volume={208},
   date={2017},
   number={3},
   pages={1155--1211},
   issn={0020-9910},
   review={\MR{3648978}},
   doi={10.1007/s00222-016-0703-3},
}

\bib{MR3904453}{article}{
   author={Figalli, Alessio},
   author={Serra, Joaquim},
   title={On the fine structure of the free boundary for the classical
   obstacle problem},
   journal={Invent. Math.},
   volume={215},
   date={2019},
   number={1},
   pages={311--366},
   issn={0020-9910},
   review={\MR{3904453}},
   doi={10.1007/s00222-018-0827-8},
}

\bib{Friedman}{book}{
   author={Friedman, Avner},
   title={Variational principles and free-boundary problems},
   series={Pure and Applied Mathematics},
   note={A Wiley-Interscience Publication},
   publisher={John Wiley \& Sons, Inc., New York},
   date={1982},
   pages={ix+710},
   isbn={0-471-86849-3},
   review={\MR{679313}},
}

\bib{MR3855748}{article}{
   author={Ros-Oton, Xavier},
   title={Obstacle problems and free boundaries: an overview},
   journal={SeMA J.},
   volume={75},
   date={2018},
   number={3},
   pages={399--419},
   issn={2254-3902},
   review={\MR{3855748}},
   doi={10.1007/s40324-017-0140-2},
}
	
\bib{MR1714335}{article}{
   author={Weiss, Georg S.},
   title={A homogeneity improvement approach to the obstacle problem},
   journal={Invent. Math.},
   volume={138},
   date={1999},
   number={1},
   pages={23--50},
   issn={0020-9910},
   review={\MR{1714335}},
   doi={10.1007/s002220050340},
}

\end{biblist}
\end{bibdiv}

\end{document}